\newtheorem{prop}{Proposition}[section]
\newtheorem{thm}[prop]{Theorem}
\newtheorem{cor}[prop]{Corollary}
\newtheorem{ques}[prop]{Question}
\theoremstyle{definition}
\newtheorem{de}[prop]{Definition}
\theoremstyle{remark}
\newtheorem{Remarks}[prop]{Remarks}             
\def\CP{{\mathbb C \mathbb P}}
\def\Z{{\mathbb Z}}
\def\inter{\mathop{\rm int}\nolimits}
\def\id{\mathop{\rm id}\nolimits}
\def\SO{\mathop{\rm SO}\nolimits}
\def\co{\colon\thinspace}
\def\p{\mathop{\rm point}\nolimits}
\begin{document}
\title{Infinite order corks}
\author{Robert E. Gompf}
\address{The University of Texas at Austin, Mathematics Department RLM 8.100, Attn: Robert Gompf,
2515 Speedway Stop C1200, Austin, Texas 78712--1202}
\email{gompf@math.utexas.edu}
\begin{abstract} We construct a compact, contractible 4--manifold $C$, an infinite order self-diffeomorphism $f$ of its boundary, and a smooth embedding of $C$ into a closed, simply connected 4--manifold $X$, such that the manifolds obtained by cutting $C$ out of $X$ and regluing it by powers of $f$ are all pairwise nondiffeomorphic. The manifold $C$ can be chosen from among infinitely many homeomorphism types, all obtained by attaching a 2--handle to the meridian of a thickened knot complement.
\end{abstract}
\maketitle


\section{Introduction}

The wild proliferation of exotic smoothings of 4--manifolds highlights the failure of high--dimensional topology to apply in dimension 4, notably through failure of the h--Cobordism Theorem. Attempts to understand this issue led to the notion of a {\em cork twist}. A cork, as originally envisioned, is a contractible, smooth submanifold $C$ of a closed 4--manifold $X$, with an involution $f$ of $\partial C$, such that cutting out $C$ and regluing it by the twist $f$ changes the diffeomorphism type of $X$ (while necessarily preserving its homeomorphism type). We can think of $C$ as a control knob with two settings, toggling between two smoothings of $X$. The first example of a cork was discovered by Akbulut \cite{A}. Subsequently, various authors (\cite{CFHS}, \cite{M}, see \cite{GS} for more history) showed that any two homeomorphic, simply connected (smooth) 4--manifolds are related by a cork twist. Since then, much work has been done (eg \cite{AR}, \cite{AY}) to understand and apply cork twists. Various people, going back at least to Freedman in the 1990s, have asked whether higher order corks may exist---that is, knobs with $n$ settings for $n$ different diffeomorphism types, or possibly even infinitely many settings all realizing distinct types. Recently, progress has been made by modifying known examples of corks: Tange \cite{T} exhibited knobs with $n$ settings for any finite $n$, displaying two diffeomorphism types on $X$. Independently, Auckly, Kim, Melvin and Ruberman \cite{AKMR} constructed the desired finite order corks. More generally, they constructed $G$--corks for any finite subgroup $G$ of $\SO(4)$, where the control knob can be set to any element of $G$ to yield $|G|$ diffeomorphism types. However, both of these latter papers pose the infinite order case as a still unsolved problem, in spite of fruitless attacks by various mathematicians. The purpose of the present article is to exhibit a large family of infinite order corks, arising from a simple general construction.

There is variation in the literature about the definition of a cork. All approaches share the following:

\begin{de} A {\em cork} $(C,f)$ is a smooth, compact, contractible 4--manifold $C$ with a diffeomorphism $f\co\partial C\to\partial C$. The cork will be called {\em nontrivial} if $f$ does not extend to a self-diffeomorphism of $C$. If $C$ is smoothly embedded in a 4--manifold $X$, cutting out $C$ and regluing it by $f$ to get $(X-\inter C)\cup_f C$ will be called a {\em twist} by $f$. 
\end{de}

\noindent Note that $(C,f^k)$ is then a cork for any $k\in\Z$, so we also talk about twisting by powers $f^k$. By Freedman \cite{F}, \cite{FQ}, $f$ necessarily extends to a self-homeomorphism of $C$, so a cork twist does not change the homeomorphism type of a manifold. In some references, $f$ is required to be an involution, or extend to a finite cyclic (or other finite) group action on $\partial C$. Since we are interested in $\Z$--actions, no additional hypothesis is needed. We can now state our main existence theorem, which is proved in Section~\ref{Cons}.

\begin{thm}\label{main} There is a cork $(C,f)$ and a smooth embedding of $C$ into a closed, simply connected 4--manifold $X$, for which the manifolds $X_k$, $k\in\Z$, obtained by twisting by $f^k$ are homeomorphic but pairwise nondiffeomorphic. Hence, the corks $(C,f^k)$ are distinct (up to diffeomorphism commuting with the maps), and nontrivial unless $k=0$.
\end{thm}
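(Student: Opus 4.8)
The plan is to produce a single cork and embedding that simultaneously realizes infinitely many smooth structures, and the natural strategy is to build the whole family from one model computation using a cut-and-paste invariant that is sensitive to the infinite-order twist. I would look for a contractible $C$ whose boundary admits a diffeomorphism $f$ of infinite order that ``rotates'' some internal structure---the abstract suggests $C$ is obtained by attaching a $2$--handle to the meridian of a thickened knot complement, so I expect $f$ to come from the obvious $\Z$--action (a meridional or longitudinal Dehn-twist-type shuffling) on the knot complement factor, an action that visibly does not extend over the contractible filling. The key geometric input is that twisting $X$ by $f^k$ should change the embedded copy of the knot-complement piece in a way recorded by powers of a single gluing, so that the $X_k$ differ only by reparametrizing one standard region while staying homeomorphic by Freedman.

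With the family $\{X_k\}$ in hand, the heart of the proof is a smooth invariant $\Phi$ that takes distinct values on distinct $X_k$. My first step would be to arrange the ambient $X$ (and the complementary region $X-\inter C$) so that a gauge-theoretic or Floer-theoretic invariant is computable and behaves multiplicatively or additively under the twist. Concretely I would try to set things up so that some relative invariant---a Seiberg--Witten or Heegaard Floer count, or the Bauer--Furuta stable homotopy refinement---factors as a pairing between a fixed relative class coming from $X-\inter C$ and a class that is translated by $f_*^k$ on the cork boundary, reducing the comparison $X_j\not\cong X_k$ to showing that this pairing takes infinitely many distinct values as $k$ ranges over $\Z$. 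The cleanest realization would be to choose the knot and the ambient manifold so that the relevant invariant sees an unbounded quantity (for instance a genus, a correction-term sequence, or a sequence of basic classes) that strictly increases with $|k|$, giving pairwise distinctness in one stroke.

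The main obstacle, and the step I expect to be genuinely hard, is exactly this infinitude: finite-order corks in \cite{T} and \cite{AKMR} only need to separate boundedly many structures, and the historical difficulty (Freedman and others) is that a single cork twist tends to be ``invisible'' to invariants after one application, so producing an invariant that stays nonconstant under \emph{all} powers simultaneously is the crux. I would attack this by engineering the embedding so that each twist demonstrably alters a localized, detectable feature---ideally so that $X_k$ contains an embedded surface or a computable cobordism whose invariant grows without bound in $k$---rather than hoping a fixed invariant separates infinitely many a priori similar manifolds. Once such a monotone or injective invariant $\Phi$ with $\Phi(X_j)\neq\Phi(X_k)$ for $j\neq k$ is established, the remaining conclusions are formal: homeomorphism of all $X_k$ follows from Freedman's theorem as noted after the definition, nondiffeomorphism is $\Phi$, and the distinctness and nontriviality of the corks $(C,f^k)$ follow because an extension of $f^k$ over $C$ (or a diffeomorphism conjugating $(C,f^j)$ to $(C,f^k)$) would force $X_j\cong X_k$, contradicting $\Phi$.
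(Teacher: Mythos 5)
Your proposal correctly identifies the overall architecture---a family $X_k$ obtained by powers of a single localized regluing, distinguished by a gauge-theoretic invariant, with the cork conclusions following formally---and your guess that $f$ is a Dehn-twist-type map on a torus inside the boundary of a thickened knot complement is essentially right. But the two steps you flag as the hard ones are exactly the ones your proposal does not supply, and they are where the actual content of the proof lives. First, the source of infinitely many smooth structures is not an invariant that ``grows'' under iteration of an abstract pairing: the paper takes $X_k$ to be the Fintushel--Stern knot surgery on $E(n)$ along the twist knots $K_k=\kappa(k,-1)$, so the $X_k$ are pairwise nondiffeomorphic for free, by the knot surgery formula (Seiberg--Witten invariants given by Alexander polynomials, which distinguish the $K_k$). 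The real issue is then purely geometric: showing that the passage from $X_0$ to $X_k$ is supported in a single contractible submanifold. This works because $K_k$ is obtained from the unknot by $(-\frac1k)$--surgery on the boundary of a punctured torus $\Sigma$ at the clasp, i.e., by slitting the knot complement along an annulus $I\times\partial\Sigma$ and regluing by the $k$-th power of one fixed Dehn twist $g$; after crossing with $S^1$, all the $X_k$ differ by regluing along $N=I\times\partial\Sigma\times S^1$ by powers of $g\times\id_{S^1}$.

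Second, and most critically, you say you would ``engineer the embedding so that each twist alters a localized feature,'' but that is a restatement of the goal, not a construction. The paper's key step is to build a contractible $C\subset X_0$ with $N\subset\partial C$: start from $Y=I\times\Sigma\times S^1$ (which has $b_1=3$, $b_2=2$), observe that the three generating circles of $\pi_1(Y)$ are all matched with vanishing cycles of $E(n)$ under the knot surgery gluing, attach the corresponding three ambient $(-1)$--framed $2$--handles to kill $\pi_1$, and then delete neighborhoods of two embedded disks meeting the generating tori in the right intersection pattern to kill $H_2$. This use of the elliptic surface's vanishing cycles to contract the neighborhood of the twisting region is the idea your proposal is missing; without it (or a substitute), the argument does not get off the ground. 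The formal endgame you describe---Freedman for the homeomorphisms, and nonextendability of $f^k$ because an extension would give $X_0\cong X_k$---matches the paper.
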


\noindent In the terminology of \cite{AKMR}, the embedding $C\hookrightarrow X$ is {\em $\Z$--effective} and exhibits $(C,f)$ as the first example of a {\em $\Z$--cork}.

\begin{cor} The homology 3--sphere $\partial C$ bounds infinitely many smooth, contractible manifolds that are all diffeomorphic, homeomorphic rel boundary and pairwise nondiffeomorphic rel boundary.
\end{cor}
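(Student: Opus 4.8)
The plan is to realize the required contractible manifolds as copies of $C$ itself, distinguished only by how their common boundary is parametrized. Fix once and for all the homology sphere $Y=\partial C$. For each $k\in\Z$ let $C_k$ denote the manifold $C$, together with the identification $f^k\co\partial C_k=\partial C\to Y$ of its boundary with $Y$; thus every $C_k$ is a smooth, compact, contractible manifold bounding $Y$. As \emph{abstract} manifolds all the $C_k$ are equal to $C$, so they are certainly all diffeomorphic to one another; this disposes of the first assertion. It remains to compare them rel boundary, and here the key reformulation is that a diffeomorphism (resp.\ homeomorphism) $C_j\to C_k$ rel boundary is the same thing as a self-diffeomorphism (resp.\ self-homeomorphism) $g$ of $C$ whose restriction to $\partial C$ equals $f^{\,j-k}$.

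First I would establish that the $C_k$ are pairwise homeomorphic rel boundary. As noted after the definition of a cork, Freedman's work \cite{F},\cite{FQ} guarantees that for every $m$ the boundary diffeomorphism $f^m$ of the contractible manifold $C$ extends to a self-homeomorphism of $C$. Applying this with $m=j-k$ produces the required homeomorphism $C_j\to C_k$ rel boundary, for all $j,k$.

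Finally, and this is the only place where real content enters, I would show that the $C_k$ are pairwise nondiffeomorphic rel boundary. By the reformulation above, a diffeomorphism $C_j\to C_k$ rel boundary with $j\neq k$ would be a self-diffeomorphism of $C$ restricting to $f^{\,j-k}$ on the boundary, i.e.\ it would witness that $f^{\,j-k}$ extends over $C$. But Theorem~\ref{main} asserts precisely that the cork $(C,f^{\,m})$ is nontrivial for every $m\neq 0$, meaning no such extension exists. Hence $C_j$ and $C_k$ are not diffeomorphic rel boundary whenever $j\neq k$, yielding infinitely many manifolds with all the stated properties. The main obstacle is thus not in the corollary itself---which is a formal repackaging, the only care being the bookkeeping of boundary parametrizations and the (automatically orientation-preserving) meaning of ``rel boundary''---but in Theorem~\ref{main}, whose nontriviality statement is exactly what drives the argument.
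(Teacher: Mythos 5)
Your argument is correct and is exactly the paper's approach: the paper's entire proof is the one line ``Identify $\partial C$ as the boundary of $C$ using each of the diffeomorphisms $f^k$,'' and you have simply unpacked the bookkeeping (abstract diffeomorphism is trivial, rel-boundary homeomorphism comes from Freedman, rel-boundary nondiffeomorphism is the nontriviality of $(C,f^{\,j-k})$ from Theorem~\ref{main}). Nothing further is needed.
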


\begin{proof}
Identify $\partial C$ as the boundary of $C$ using each of the diffeomorphisms $f^k$.
\end{proof} 

\begin{cor} There is a compact, contractible 4--manifold admitting infinitely many nondiffeomorphic smooth structures.
\end{cor}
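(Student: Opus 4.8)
The plan is to upgrade the relative exotica of the preceding corollary to \emph{absolute} exotica by enlarging the cork to a contractible manifold whose boundary is too rigid to absorb the twisting. Write $Z = X - \inter C$, so that the manifolds of Theorem~\ref{main} are $X_k = Z \cup_{f^k} C$. The reason one cannot simply take $C$ itself is that $f$ represents an \emph{infinite-order} element of $\pi_0\mathrm{Diff}^+(\partial C)$: the twisted copies $(C,f^k)$ are all abstractly diffeomorphic, so their differences vanish once the boundary parametrization is forgotten. I would therefore replace $\partial C$ by a homology $3$--sphere on which no analogue of $f$ can act.

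First I would choose a separating homology $3$--sphere $N\subset\inter Z$ cutting $Z$ into a homology cobordism $A$ from $\partial C$ to $N$ together with a remaining piece $Y$, arranged so that $X = \hat C \cup_N Y$, where $\hat C := C\cup_{\partial C}A$ is compact and contractible with $\partial\hat C = N$. Concretely, $A$ can be built inside a collar of $\partial C$ in $Z$ by attaching $4$--dimensional $2$--handles along a suitably framed link in $\partial C$, pushed into $Z$, with the framings chosen so that the trace is a homology cobordism; this keeps $\hat C$ a contractible submanifold of $X$. Since the twisting still occurs along $\partial C\subset\inter\hat C$, each power of $f$ yields a smooth structure $\hat C_k := A\cup_{f^k}C$ on the fixed topological manifold $\hat C$ (the homeomorphism type is unchanged because $f^k$ extends over $C$ topologically by Freedman~\cite{F}, \cite{FQ}), with boundary $N$ and boundary identification untouched, and $X_k = \hat C_k \cup_N Y$.

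The decisive step is then an extension argument. Suppose only finitely many absolute diffeomorphism types occurred among the $\hat C_k$; then some infinite set of indices would give pairwise diffeomorphic manifolds. Fixing one such index $k_0$ and restricting each diffeomorphism $\hat C_{k_0}\to\hat C_k$ to $N$ produces a class in $\pi_0\mathrm{Diff}^+(N)$. If $N$ is chosen so that $\pi_0\mathrm{Diff}^+(N)$ is finite, a pigeonhole argument yields two indices $k\neq l$ whose boundary restrictions agree, so that the composite diffeomorphism $\hat C_k\to\hat C_l$ restricts on $N$ to a map isotopic to $\id_N$. Isotoping it to the identity on a collar gives a diffeomorphism equal to $\id_N$ on $N$, which glues to $\id_Y$ to produce a diffeomorphism $X_k\to X_l$, contradicting Theorem~\ref{main}. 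Hence the $\hat C_k$ realize infinitely many distinct smooth structures on the contractible manifold $\hat C$.

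The main obstacle is the construction of the hypersurface $N$: I must produce it inside $Z$ so that \emph{simultaneously} $A$ is a homology cobordism (keeping $\hat C$ contractible and simply connected), the embedding $\hat C\hookrightarrow X$ is retained, and $\pi_0\mathrm{Diff}^+(N)$ is finite. The last condition is where the real work lies; I would arrange $N$ to be hyperbolic, so that its mapping class group is finite by Mostow rigidity, through a careful choice of the surgery link and framings. Checking that hyperbolicity and the homology--cobordism condition can be met at once, within the constraints imposed by the fixed complement $Z$, is the crux of the argument.
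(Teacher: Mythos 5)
The paper's entire proof is a citation: the previous corollary shows $\partial C$ bounds infinitely many contractible manifolds that are pairwise nondiffeomorphic \emph{rel boundary}, and the ``relative to absolute'' upgrade is exactly the theorem of Akbulut and Ruberman \cite{AR}. What you have written is, in essence, an attempt to reprove \cite{AR} from scratch, and you have correctly identified both the strategy (enlarge $C$ across a homology cobordism $A$ to a contractible $\hat C$ whose boundary $N$ has small mapping class group, so that boundary diffeomorphisms can no longer absorb the twist) and the place where all the difficulty lives. Your reduction is sound as far as it goes: if $A$ is a $2$--handle homology cobordism then $\hat C=C\cup A$ is again contractible, the $\hat C_k$ are homeomorphic rel boundary by Freedman, and the pigeonhole argument using finiteness of $\pi_0\mathrm{Diff}(N)$ (for hyperbolic $N$ this is the isometry group, by Mostow rigidity together with Gabai's theorem identifying $\mathrm{Diff}(N)$ with $\mathrm{Isom}(N)$) correctly converts ``$\hat C_k\approx\hat C_l$ absolutely'' into ``$X_k\approx X_l$,'' contradicting Theorem~\ref{main}. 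Your use of the ambient closed manifold $X$ to reach the contradiction is a genuine variant of \cite{AR}, who instead use \emph{invertibility} of the cobordism $A$ (so that capping with the inverse recovers $C$ rel boundary and no ambient manifold is needed); your version works but imposes the extra requirement that $A$ embed in $Z$, which you would get for free from invertibility since an invertible cobordism embeds in the collar $\partial C\times I$.

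The genuine gap is the step you yourself flag as the crux: producing the homology cobordism $A$ (embedded in a collar, with $\pi_1(\partial C)\to\pi_1(A)$ surjective so that $\hat C$ stays contractible) whose other end $N$ is a hyperbolic homology sphere with finite --- in \cite{AR}, trivial --- mapping class group. This is not a routine choice of ``a suitably framed link''; it is the main technical content of \cite{AR}, resting on Myers-type results producing hyperbolic knots and an analysis guaranteeing the resulting $N$ is hyperbolic with no symmetries, together with the invertibility needed to keep everything inside a collar. As a self-contained argument your proposal therefore does not close; as soon as you invoke \cite{AR} for the existence of $A$ and $N$, it collapses to the paper's one-line proof. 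You should either cite that result outright or expect to reproduce a substantial construction to fill the hole.
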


\begin{proof}
This follows immediately from the previous corollary and Akbulut and Ruberman \cite{AR}.
\end{proof} 

We obtain infinitely many examples of corks $(C,f)$ as in the theorem, distinguished by the homeomorphism types of their boundaries. However, our examples all have a simple form. For any knot $\kappa\subset S^3$, let $P$ be its closed complement, and let $C(\kappa,m)$ be the oriented 4--manifold obtained from $I\times P$, where $I$ denotes the interval $[-1,1]$ throughout the paper, by attaching a 2--handle along the meridian to $\kappa$ in $\{1\}\times P$ with framing $m$. Note that $I\times P$ can be identified with the obvious ribbon complement of $\kappa\#\overline\kappa$ in $B^4$, so this is a special case of removing a slice disk and regluing it with a twist. Either perspective reveals the identity $C(\overline\kappa,m)\approx C(\kappa,m)$, and these are orientation-reversing diffeomorphic to $C(\kappa,-m)$.  Clearly, $C(\kappa,m)$ is the 4--ball when $m=0$ or $\kappa$ is unknotted, but otherwise it is a contractible manifold whose boundary is irreducible and not $S^3$. In fact, $\partial C(\kappa,m)$ is obtained by $(-\frac{1}{m})$--surgery on $\kappa\#\overline \kappa$, and contains two oppositely oriented copies of the complement $P$. When $\kappa$ is prime, the JSJ decomposition of $\partial C(\kappa,m)$ begins by splitting out these complements. (This gives the entire decomposition unless $\kappa$ is a satellite knot, in which case the splitting continues symmetrically.) Since the complements can then be recovered from $\partial C(\kappa,m)$, it follows that the manifolds $C(\kappa,m)$ ($\kappa$ prime) are never orientation-preserving homeomorphic unless the corresponding knots are the same up to orientation and the (nonzero) integers are equal. In our examples, $\kappa$ is the double twist knot $\kappa(r,-s)=\kappa(-s,r)$ shown in Figure~\ref{knot}, where the boxes count full twists, right--handed when the integer is positive. The resulting oriented 4--manifolds $C(r,s;m)=C(\kappa(r,-s),m)$, for $r,s>0$ and $m\ne0$, are not orientation-preserving homeomorphic to each other unless the integers $m$ agree and the pairs $(r,s)$ agree up to order. In general, the incompressible torus $\{0\}\times\partial P$ in $\partial C(\kappa,m)$ can be used to create self-diffeomorphisms of the latter: Let $f\co\partial C(\kappa,m)\to\partial C(\kappa,m)$ be obtained by rotating the torus $\{t\}\times\partial P$ parallel to the canonical longitude of $\kappa$, through angle $( t+1)\pi$, $t\in I=[-1,1]$, as we pass through $I\times\partial P$, and extending as the identity. Our simplest cork, $(C(1,1;-1),f)$ is made in this manner from the figure--eight knot $\kappa(1,-1)$. Its boundary is given by surgery with coefficient 1 on the connected sum of two figure--eight knots, with the obvious incompressible torus in the complement of this sum. More generally, we have:

\begin{figure}
\labellist
\small\hair 2pt
\pinlabel $-s$ at 70 86
\pinlabel $r$ at 70 15
\endlabellist
\centering
\includegraphics{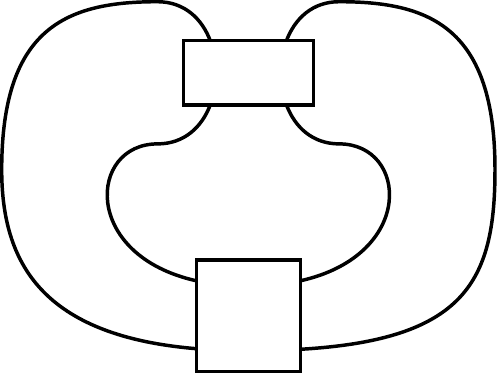}
\caption{The double twist knot $\kappa(r,-s)$.}
\label{knot}
\end{figure}

\begin{thm}\label{corks}
The cork $C$ appearing in Theorem~\ref{main} can be taken to be any of the infinitely many contractible manifolds $C(r,s;m)$ with $r,s>0$ and $m\ne0$, and $f$ as specified above. The manifolds $X_k$ can be assumed to be irreducible, except possibly if $r$, $s$ or $|m|$ equals 2.
\end{thm}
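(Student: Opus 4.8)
The plan is to prove Theorem~\ref{corks} directly, which also yields Theorem~\ref{main}; since cork twists preserve homeomorphism type by Freedman's extension result noted above, the entire content is to build a suitable embedding and then separate the twists by a smooth invariant. Fix $\kappa=\kappa(r,-s)$ with $r,s>0$ and $m\ne0$, and let $P$ be its complement. I would embed $C=C(r,s;m)$ into a closed, simply connected $X_0$ so that the incompressible torus $T=\{0\}\times\partial P\subset\partial C$ becomes a homologically essential torus of square $0$ with simply connected complement, and so that $X_0$ carries nontrivial Seiberg--Witten invariants. Concretely, I would start from a minimal K\"ahler (or symplectic) surface containing an essential square-$0$ torus whose complement is simply connected, and realize $X_0$ as a knot-surgery/fiber-sum manifold in which a standard collar $I\times\partial P$ about $T$ is available to receive $C$; simple connectivity and the expected smooth type then follow from van Kampen and standard gluing arguments.

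The crux is to identify the twist by $f^k$ with a gauge-theoretically computable modification. Because $f$ is supported in the collar $I\times\partial P$, acting there as the longitudinal rotation through angle $(t+1)\pi$, twisting by $f^k$ is exactly the result of cutting along $T$ and regluing by the $k$-th power of the Dehn twist $\tau_\lambda$ along the longitude $\lambda\subset T$. I would show this torus re-gluing is a standard torus surgery on $T$, whose effect on Seiberg--Witten invariants is governed by the Fintushel--Stern knot-surgery formula via the Morgan--Mrowka--Szab\'o gluing theorem: expressed as a Laurent polynomial in $t=\exp(2[T])$, the invariant $SW_{X_k}$ is obtained from $SW_{X_0}$ by a factor built from $\Delta_\kappa$ and depending on $k$. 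Since $\Delta_{\kappa(r,-s)}(t)=rs\,t-(2rs+1)+rs\,t^{-1}$ is nontrivial for $r,s>0$, the breadth of this invariant in the $[T]$-direction---equivalently, the set of basic classes, which is a diffeomorphism invariant---varies nontrivially with $k$, so the $X_k$ are pairwise nondiffeomorphic and the corks $(C,f^k)$ are distinct and (for $k\ne0$) nontrivial, $f$ having infinite order in the mapping class group of $\partial C$ because $T$ is incompressible. Carrying out this identification uniformly in $(r,s,m)$ and precisely enough that the surgery formula applies simultaneously for all $k\in\Z$---so that the Alexander factor genuinely accumulates rather than collapsing---is the main obstacle, and is exactly what promotes the known finite-order constructions to infinite order.

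Finally, for irreducibility I would use that each $X_k$ has $b_2^+>1$ and nontrivial Seiberg--Witten invariants, so by the connected-sum vanishing theorem it admits no splitting into two summands each with $b_2^+>0$; any smooth connected-sum summand must therefore be negative definite, hence (by Donaldson's theorem, using simple connectivity) a sum of copies of $\overline{\CP^2}$ or a homotopy sphere. Ruling these out reduces to minimality, i.e. the absence of a smoothly embedded $-1$--sphere, which I would control through the blow-up formula applied to the computed basic classes. This minimality argument is generic but degenerates when a parameter forces a low-square sphere or a cyclotomic/even degeneration of the Alexander factor, which is consistent with the stated failure of irreducibility precisely when $r$, $s$ or $|m|$ equals $2$.
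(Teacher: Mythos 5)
Your proposal bypasses the actual content of the theorem. The paper's proof of Theorem~\ref{corks} is not a fresh gauge-theoretic construction but an identification: it shows that the contractible manifold $C$ already built inside $E(n)$ in the proof of Theorem~\ref{main} (from $I\times\Sigma\times S^1$ plus three vanishing-cycle $2$--handles minus two disk neighborhoods) is diffeomorphic to $C(1,1;-1)$, via the observation that attaching a handle to $I\times Q$ along a framed sphere in $\{1\}\times Q$ and deleting its core swept down to $\{-1\}\times Q$ yields $I\times P$ with $P$ the surgered manifold; a Borromean-rings computation then exhibits $P$ as the figure-eight complement. General $(r,s,m)$ are reached by changing the framings of the three handles, either by blowing up (sacrificing irreducibility) or by tubing the handle cores into disjoint spheres of square $-2$ and $-3$ located in a large $E(n)$; the exclusion of the value $2$ is exactly the arithmetic fact that the required framing change of $1$ cannot be realized by such spheres without a $-1$--sphere, not a ``cyclotomic/even degeneration of the Alexander factor'' as you suggest. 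You give no construction of the embedding $C(r,s;m)\hookrightarrow X_0$ at all---that embedding is the whole difficulty---and no identification of any abstractly constructed cork with $C(r,s;m)$.

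Second, your gauge-theoretic mechanism is misaimed. The torus you twist along, $T=\{0\}\times\partial P\subset\partial C$, is \emph{nullhomologous} in $X_0$ (in the paper it is $\partial\Sigma\times S^1$, which bounds $\Sigma\times S^1$), so $t=\exp(2[T])$ is trivial and ``breadth in the $[T]$--direction'' carries no information. The manifolds $X_k$ are distinguished not by $\Delta_{\kappa(r,-s)}$ but by the Alexander polynomials of the auxiliary family of twist knots $K_k=\kappa(k,-1)$: the twist by $f^k$ converts the Fintushel--Stern surgery on $K_0$ (the unknot) into the surgery on $K_k$, performed along the homologically essential \emph{fiber} torus of $E(n)$, and it is the pairwise-distinct polynomials $\Delta_{K_k}$ (together with Sunukjian's theorem once the construction is modified for general $(r,s,m)$) that separate the diffeomorphism types. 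The knot $\kappa(r,-s)$ defining the cork and the knots $K_k$ doing the distinguishing play entirely different roles, and conflating them is a gap that cannot be patched within your outline. Your irreducibility sketch (SW nonvanishing plus ruling out $-1$--spheres) is close in spirit to the paper's argument via Fr\o yshov's gluing formula, but as written it rests on the incorrect identification above and on an embedding you have not produced.
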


\noindent Recall that a 4--manifold is {\em irreducible} if it cannot split as a smooth connected sum unless one summand is homeomorphic to $S^4$. Other explicit constructions of corks in the literature typically involve reducible (blown up) manifolds. It seems likely that the restriction avoiding 2 is unnecessary; see Remark~\ref{even}(a).

Our incompressible torus in $C(\kappa,m)$ can be also used to define other twists. Instead of twisting parallel to the longitude, we could twist parallel to the meridian, or more generally, twist using any element of $H_1(T^2)\cong\Z\oplus\Z$. Thus, it is natural to ask both about other contractible manifolds and other twists:

\begin{ques} (a) Is every pair $(C(\kappa,m),f)$, for $\kappa$ a nontrivial knot, $m\ne0$ and $f$ a longitudinal twist as given above, a $\Z$--cork? 

(b) Does twisting by other elements of $H_1(T^2)$ ever extend these to $(\Z \oplus\Z)$--corks?
\end{ques}

\noindent Akbulut, in a preliminary version of \cite{A2}, previously studied the meridian twist for $\kappa$ the trefoil and $m=-1$, trying to prove nontriviality. However, we show in \cite{Cork2} that the meridian twist extends over every $C(\kappa,\pm 1)$. Recently, Ray and Ruberman \cite{RR} answered (a) in the negative for torus knots $\kappa$ when $|m|=1$. It follows that every boundary diffeomorphism extends over $C(\kappa,\pm1)$ for such knots \cite{Cork2}. See the latter paper for further discussion and the translation of the main proofs of this paper into the language of handle calculus. The question is still open for meridian twists when $|m|\ge 2$ and for longitudinal twists with nontorus knots $\kappa$ outside our family $\{\kappa(r,-s)|r,s>0\}$.

More recently, Tange has posted papers extending the methods of this article to exhibit $n$--fold boundary sums of our $\Z$--corks as $\Z^n$--corks \cite{T3} and providing constraints on families of manifolds that can be related by $\Z$--corks \cite{T2}.

\section*{Acknowledgements}
The author would like to thank Cameron Gordon, Danny Ruberman and Andr\'as Stipsicz for helpful comments.


\section{Constructing corks}\label{Cons}

The closed manifolds $X_k$ in Theorem~\ref{main} are made from the elliptic surface $E(n)$, for a fixed $n\ge1$, by the Fintushel--Stern knot construction \cite{FS}. Recall (eg \cite{GS}) that $E(n)$ has a standard description in which it is built from $S^1\times S^1\times D^2$ (a neighborhood of a regular fiber $F=S^1\times S^1$) by adding handles. Of most interest for present purposes, each of the two circle factors has $6n$ parallel copies ({\em vanishing cycles}) to which 2--handles are attached with framing $-1$ (relative to the product framing of the boundary 3--torus). We will use three of these 2--handles. Given a knot $K\subset S^3$, let $M$ denote its closed complement. The knot construction consists of removing  $S^1\times S^1\times D^2$ from $E(n)$ and replacing it by $M\times S^1$, gluing by a diffeomorphism of the boundary 3--torus that identifies the canonical longitude of $K$ with $\{\p\}\times\partial D^2$, and the meridian of $K$ and circle $\{\p\}\times S^1$ in $M\times S^1$ with copies of the two circle factors of $F$. As detailed in \cite{FS}, Freedman's classification \cite{F}, \cite{FQ} shows that the resulting manifold $X_K$ retains the homeomorphism type of $E(n)$ (which is simply connected with $b_2=12n-2$ and signature $-8n$, and is even if and only if $n$ is). However when $n\ge2$, varying the knot $K$ results in diffeomorphism types that are distinguished by their Seiberg--Witten invariants if and only if the knots in question are distinguished by their Alexander polynomials. The structure of the Seiberg--Witten invariants then also shows that each $X_K$ is irreducible. When $n=1$ the discussion becomes more technical, but these statements remain true for the $k$--twist knots $K_k=\kappa(k,-1)$ with $k\in\Z$ \cite{FS2}, except that the unknot $K_0$ yields the reducible manifold $E(1)$, a sum of copies of $\pm \CP^2$. For fixed $n\ge1$, let $X_k$ be obtained as above from the twist knot $K_k$.  Since these knots are distinguished by their Alexander polynomials, Theorem~\ref{main} follows once we locate a contractible $C\subset X=X_0=E(n)$ with a twist $f$ for which each power $f^k$ gives the corresponding $X_k$.

\begin{figure}
\labellist
\small\hair 2pt
\pinlabel $\partial\Sigma$ at 30 162
\pinlabel $\Sigma$ at 96 150
\pinlabel $C_{-1}$ at 175 83
\pinlabel $C_{+1}$ at 58 83
\pinlabel $K_k$ at 183 40
\pinlabel $K_k$ at 1 45
\endlabellist
\centering
\includegraphics{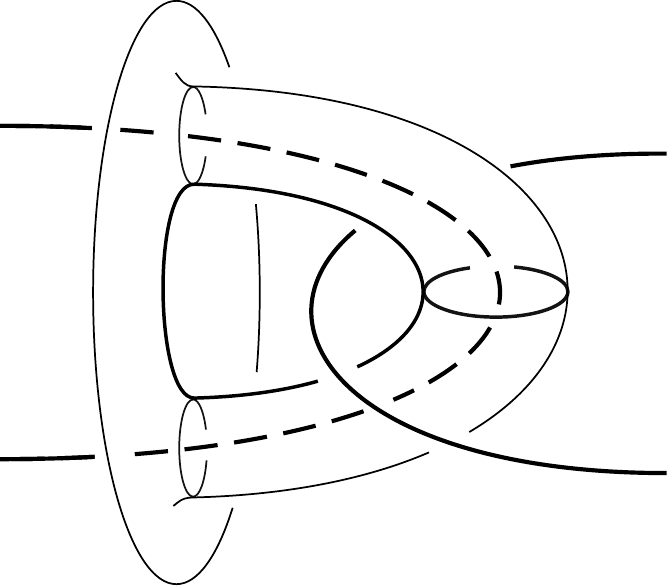}
\caption{The punctured torus $\Sigma$ in the knot complement $M=S^3-\nu K_k$ ($k=0$).}
\label{torus}
\end{figure}

\begin{proof}[Proof of Theorem~\ref{main}]
Let $\Sigma\subset M$ be the punctured torus depicted in Figure~\ref{torus} (near the clasp of $K_k$, the $-s=-1$ twist box in Figure~\ref{knot}) with circles $C_{\pm1}$ generating its homology. Set $k=0$ for this, but note that the corresponding picture for any $k$ (with the correct longitude) is then obtained from the $k=0$ case by $(-\frac{1}{k})$--surgery on the circle $\partial\Sigma$. To examine this surgery more closely, identify a tubular neighborhood of $\Sigma$ in $M$ with $I\times \Sigma$, where $I=[-1,1]$ and $\{1\}\times \Sigma$ contains the outer part of the boundary of $I\times \Sigma$ visible in the figure. Let $A$ be a collar of $\partial \Sigma$ in $\Sigma$. Then we can perform the required surgery by cutting out and regluing the solid torus $I\times A$. Since the surgery coefficient has numerator 1, we can take the gluing diffeomorphism to be the identity everywhere except on the annulus $I\times \partial\Sigma$. That is, we get from the $k=0$ case to the case of arbitrary $k$ by slitting $M$ open along the annulus  $I\times \partial\Sigma$ and regluing by $g^k$ for a suitable Dehn twist $g$ of the annulus. Hence, to transform $X_0$ to $X_k$, we slit $X_0$ open along the 3--manifold  $N=I\times \partial\Sigma\times S^1\subset M\times S^1$ and reglue by $(g\times\id_{S^1})^k$. (This operation can be viewed as a torus surgery, also called a logarithmic transformation, and would be a Luttinger surgery if $\{0\}\times\partial \Sigma\times S^1$ could be made Lagrangian. The latter is ruled out, however, since $X_k$ has no symplectic structure unless $|k|\le1$.) Our goal is to find a contractible manifold $C\subset X_0$ whose boundary contains $N$. Extending $g\times\id_{S^1}$ as the identity over the rest of $\partial C$ then gives the required diffeomorphism $f$ completing the proof.

Our first approximation to $C$ is the manifold $Y=I\times \Sigma\times S^1\subset M\times S^1\subset X_0$. Then $\partial Y$ clearly contains $N$, but $Y$ is far from being contractible. In fact, $Y$ is homotopy equivalent to $(S^1\vee S^1)\times S^1$, so it has $b_1=3$ and $b_2=2$, but no higher dimensional homology. Its fundamental group is generated by three circles $C^*_i$, $i=-1,0,1$ (suitably attached to the base point), where for $i=\pm1$, $C^*_i=\{i\}\times C_i\times \{\theta_i\}$ and $C^*_0=\{1\}\times\{p\}\times S^1$, for distinct points $\theta_{\pm 1}\in S^1$ and $p\in \inter\Sigma-(C_{-1}\cup C_1)$. A basis for $H_2(Y)$ is given by the pair of tori $T_i=\{0\}\times C'_i\times S^1$, $i=\pm1$, where $C'_i$ is parallel to $C_i$ in $\Sigma-\{p\}$. To improve $Y$, observe in Figure~\ref{torus} that the circles $C_{\pm1}$ in $M$ are both meridians of the knot $K_0$. Thus, the knot construction matches all three circles $C^*_i\subset\partial Y$ with vanishing cycles of $E(n)$. We obtain a new manifold $Y'\subset X_0$ by ambiently attaching a $(-1)$--framed 2--handle $h_i$ to $Y$ along $C^*_i$ for each $i=-1,0,1$. Then $Y'$ is simply connected with $T_i$, $i=\pm1$, still giving a basis for $H_2(Y')$, and $N$ still contained in $\partial Y'$. To eliminate the last homology, note that for $i=\pm1$, the core of the handle $h_i$ fits together with the annulus $I\times C_i\times\{\theta_i\}$, forming a pair of disks $D_i$ disjointly embedded rel boundary in $Y'$ (with $\partial D_i=\{-i\}\times C_i\times\{\theta_i\}$). Since each $D_i\cap T_i$ is empty, and $D_i\cap T_{-i}$ is a single point of transverse intersection, deleting tubular neighborhoods of these disks from $Y'$ gives a manifold $C$ with no homology. To see that $\pi_1(C)$ vanishes, use the core of the 2--handle $h_0$ to surger the tori $T_i$ to immersed spheres, without changing the intersections with the disks $D_i$. These spheres then provide nullhomotopies for the meridians of the disks. Thus, $C$ is a contractible manifold whose boundary contains $N$, as required.
\end{proof}

\begin{proof}[Proof of Theorem~\ref{corks}]
To identify the cork $C$ constructed in the proof of Theorem~\ref{main}, first consider any framed sphere $S$ in a manifold $Q$. If we add a handle $h$ to $I\times Q$ along ${1}\times S$, and then delete a neighborhood of the core of $h$, extended down to $\{-1\}\times Q$ using the annulus $I\times S$, the result is easily seen to be $I\times P$, where $P$ is made from $Q$ by surgery on $S$. We apply this trick with $Q=\Sigma\times S^1$ from the previous proof.  Attaching the handles $h_{\pm1}$ to $Y=I\times Q$ and deleting their cores $D_{\pm1}$ gives a manifold of the form $I\times P$ that will become $C$ when $h_0$ is attached. The manifold $P$ is obtained from $Q$ by surgery on the disjoint curves $C_{\pm1}\times\{\theta_{\pm1}\}$, with the framings induced from their identification with vanishing cycles of $E(n)$. These framings are $-1$ relative to the oriented boundary of the fiber neighborhood in $E(n)$ on which we performed the Fintushel--Stern construction, and hence, relative to $\partial Y$. However, the circles $C^*_{\pm1}$ lie on opposite faces of $Y$ (with $I$ coordinate $\pm1$), which inherit opposite orientations from $Q$. Thus, the framing coefficients are $\mp1$ relative to $Q$. To construct a surgery diagram of $Q$, we cap off $\Sigma$ to get an embedding $Q=\Sigma\times S^1\subset T^2\times S^1=T^3$, with the latter exhibited as 0--surgery on the Borromean rings $B$. To recover $Q$, we remove its complementary solid torus in $T^3$. This has the effect of undoing one Dehn filling, leaving one component of $B$ unfilled. The curves $\partial \Sigma\times\{\theta\}$ correspond to canonical longitudes of this drilled out link component, and $\{p\}\times S^1$ is a meridian of it. The surgery curves $C_{\pm1}\times\{\theta_{\pm1}\}$ are then $\mp1$--framed meridians of the other two components. Blowing down changes the unfilled curve into a figure--eight knot $\kappa(1,-1)$ in $S^3$, whose complement is $P$. Attaching $h_0$ to $I\times P$ along $C_0^*$ now gives $C=C(1,1;-1)$, and $\partial \Sigma\times S^1$ is identified with the incompressible torus boundary of the figure--eight complement inside $\partial C$, with $f$ twisting longitudinally as required.

Now that we have realized $C(1,1;-1)$ as the cork $C$ in Theorem~\ref{main}, using 4--manifolds $X_k$ generated from $E(n)$ (so irreducible except for $X_0$ when $n=1$), we can easily realize any $C(r,s;m)$ with $r,s>0>m$ by giving up irreducibility: Just blow up points on the cores of the handles $h_i$ to suitably lower their framings (as measured in $E(n)$). This replaces the original manifolds $X_k$ by their $(r+s+|m|-3)$--fold blowups, which remain pairwise nondiffeomorphic. To realize $m>0$, simply reverse the orientation on each $X_k$. Retaining irreducibility is no harder when the integers $r$, $s$ and $m$ are all odd. Simply choose $n$ large enough that $E(n)$ contains $\frac12(r+s+|m|-3)$ disjoint spheres of square $-2$ avoiding the submanifolds used in our construction. Tubing the 2--handle cores into these spheres has the same effect as blowing up, without changing $X_k$. When the integers $r$, $s$ and $|m|$ are also allowed to be even but not 2, we need an additional trick. For $n\ge3$ we locate an $E(2)$ fiber summand in $X_0$ away from the construction site, then cut it out and reglue it by a cyclic permutation of the circle factors of the boundary 3--torus. This modifies the manifolds $X_k$ so that they each contain three (and more) disjoint spheres of square $-3$, made from sections of $E(2)$ by capping off with vanishing cycles of $E(n-2)$. Using these along with our previous even spheres allows us to realize any positive values of $r$, $s$ and $|m|$ except 2. The manifolds remain pairwise nondiffeomorphic by a useful result of Sunukjian \cite{S}. (This shows that manifolds made by the Fintushel--Stern construction on a given manifold $X_0$ are distinguished by the associated Alexander polynomials, in spite of subtleties introduced by automorphisms of $\Z[H_2(X)]$.) Irreducibility follows by examining the Seiberg--Witten basic classes. These are all linear combinations of the fiber classes of the two elliptic summands (by Doug Park \cite[Corollary~22]{P} for $X_0$, extended to each $X_k$ by the Fintushel--Stern formula \cite{FS}). Thus, all differences of basic classes have square 0. However, if any $X_k$ were reducible, it would split off a negative definite summand carrying a homology class $e$ with square $-1$. Any basic class $c$ would have nonzero (odd) value on $e$. By the gluing formula of \cite[Theorem~14.1.1]{Fr}, reversing the sign of $\langle c,e\rangle$ would give a new basic class $c'$ with $(c-c')^2$ negative.
\end{proof}

\begin{Remarks}\label{even} (a) This irreducibility argument misses the case with $r$, $s$ or $|m|$ equal to 2, for the technical reason that a disjoint sphere of square $-1$ would mean our starting manifold was reducible. It seems reasonable to conjecture that irreducibility is still attainable by a different method in this case.

(b) Each of our $\Z$--corks $(C(r,s;m),f)$ ($r,s>0>m$) generates many other similar families of closed manifolds. We can vary the starting manifold $X_0$ and distinguish the resulting manifolds $X_k$ from each other by Sunukjian's result, then distinguish various families from each other by their Seiberg--Witten invariants. Alternatively, since our construction only uses a single clasp of the knots $K_k$, we can apply the construction to other families of knots (or links) related by the twisting of a clasp described by Figure~\ref{torus}.

(c) Our corks $C(r,s;m)$ all have Mazur type, built with a single handle of each index 0, 1 and 2. This is because they have the form $C(\kappa,m)$ for a 2--bridge knot $\kappa$ (namely $\kappa(r,-s)$). The complement $P$ of $\kappa$ then has a handle decomposition with two 1--handles and a 2--handle, as does $I\times P$. The final 2--handle $h_0$ of $C(\kappa,m)$ cancels a 1--handle. (See Figure~3 of \cite{Cork2}.)

(d) Each of these corks also embeds in the 4--sphere. In fact, the double of any $C(\kappa,m)$ is also obtained from the complement of the spin of $\kappa$ by filling trivially to get $S^4$ (for even $m$) or by Gluck filling (which gives $S^4$ for all spun knots \cite{Gl}). We are left with the following question, which can be restated as a problem about certain torus surgeries in $S^4$: 
\end{Remarks}

\begin{ques} Does iterated twisting on these corks in $S^4$ always give the standard $S^4$?
\end{ques}

\end{document}